\newtheorem{theorem}{Theorem}
\newtheorem{proposition}{Proposition}
\begin{document}

\title{A modified Riccati approach to analytic interpolation with applications to system identification and robust control}
\author{Yufang Cui\aref{sjtu},
	    Anders Lindquist\aref{sjtu}}

\affiliation[sjtu]{Shanghai Jiao Tong University, Shanghai, China
        \email{cui-yufang@sjtu.edu.cn},\, {alq@kth.se}}

\maketitle

\begin{abstract}
This paper provides a new method to solve analytic interpolation problems with rationality and derivative constraints, occurring in many applications to system and control. It is based on the covariance extension equation previously proposed by Byrnes and Lindquist in a different context. A complete solution for the scalar problem is provided, and a homotopy continuation method is presented and applied to some problems in modeling and robust control. Some numerical examples illustrate robustness and efficiency of the proposed procedure.
\end{abstract}

\keywords{Analytic interpolation, derivative constraints, continuation method, system identification, robust control}

\section{Introduction}\label{Introduction}
Many important questions in systems and control can be formulated as an analytic interpolation problem, which in its most general (scalar) form can be formulated in the following way.  Given $m+1$ distinct complex numbers $z_0,z_1,\dots,z_m$ in the complement $\mathbb{D}^{C}:=\{z \mid |z|>1\}$ of the closed unit disc (possibly including $z=\infty$), find a strictly {\em positive real\/} function $f$, i.e., a function $f$ that is analytic in $\mathbb{D}^{C}$ and satisfies $\text{Re}\{f(z)\}>0$ there, which satisfies the interpolation conditions 
\begin{align}
\label{interpolation}
 \frac{f^{(k)}(z_{j})}{k!}=v_{jk},\quad&j=0,1,\cdots,m,   \\
    &   k=0,\cdots n_{j}-1 \notag
\end{align}
(with $f^{(k)}$ the $k$:th derivative),  and which is rational of degree at most 
\begin{equation}
\label{deg(f)}
n:=\sum_{j=0}^{m}n_j -1.
\end{equation}
To simplify calculations, we shall normalized the problem by setting $z_0=\infty$ and $ f(\infty)=\frac{1}{2} $, which can be achieved through a simple M{\"o}bius transformation. Moreover, we assume that $f$ is a real function. This implies that $f^{(k)}(\bar{z}_j)/ k!=\bar{v}_{jk}$ is an interpolation condition whenever $f^{(k}(z_j)/ k!=v_{jk}$ is.

With $m=0$ and $n_0=n+1$,  this reduces to the {\em rational covariance extension problem\/} introduces by Kalman \cite{Kalman-81} and completely solved in steps in \cite{Gthesis,G87,BLGuM,BLpartial,BGuL}. In fact, this problem, which amounts to finding a rational positive real functions of prescribed maximal degree given a partial covariance sequence, is a basic problem in signal processing and speech processing \cite{b12} and system identification \cite{b13,LPbook}.

If $n_0=n_1=\dots=n_m=1$, i.e., the interpolation points are simple and distinct, we have the regular {\em Nevanlinna-Pick interpolation problem with degree constraint\/} \cite{b15,b1,BLkimura} occurring in robust control, high-resolution spectral estimation, simultaneous stabilization and many other problems in systems and control. In fact, the Nevanlinna-Pick interpolation problem to find a positive real function that interpolates the given data was early used in systems and control \cite{b9,b10} and show obvious advantages in spectral estimation \cite{b2}. 

The general Nevanlinna-Pick interpolation problem with degree constraint allowing derivative constraints, described above, was studied in detail in \cite{blomqvist}. This study was motivated by $H^\infty$ control problems with multiple unstable poles and/or zeros in the plant, problems that could not be handled by a classical interpolation approach \cite[p. 18]{GreenLimebeer}. 

The proof in the early work on the rational covariance extension problem \cite{Gthesis,G87} and the complete smooth parameterization of all solutions \cite{BLGuM} were nonconstructive, using topological degree theory. A first attempt  to provide an algorithm was presented by Byrnes and Lindquist \cite{BLpartial}, where a new nonstandard Riccati-type equation called the Covariance Extension Equation (CEE) was introduced. This approach was completely superseded by a convex optimization approach \cite{BGuL,b1}, and thus abandoned. However, in a brief paper \cite{b6}, Lindquist indicated that the regular Nevanlinna-Pick interpolation problem with degree constraint could also be solved by the Covariance Extension Equation, and thus he showed that CEE  is universal in the sense that it can be used to solve more general analytic interpolation problems by only changing certain parameters. 

In this paper we  take such an approach to the general Nevanlinna-Pick interpolation problem with both degree and derivative constraints, and we shall provide a homotopy continuation method to solve the corresponding CEE. It turns out this procedure is quite efficient and numerically robust. It also has the advantage of easily detecting when model reduction is possible without reducing accuracy. 

The paper is organized as follows. In Section~\ref{sec:CCE} we review useful facts about the Covariance Extension Equation and the context in which it was first presented.  Section~\ref{sec:general} presents a derivation of the CEE in the context of our new general interpolation problem.  Section~\ref{sec: theorems} presents the fundamental main theorems on existence and uniqueness of solutions and  the basic diffeomorphism results needed for homotopy continuation, 
used in Section~\ref{sec:homotopy} to develop our computational procedure.  
In Section~\ref{sec:applications}, finally, we apply our method to some problems in identification and robust control.

\section{The Covariance Extension Equation}\label{sec:CCE}

Since $f$ is analytic in $\mathbb{D}^{C}$ and $f(\infty)=\tfrac12$, there is an expansion
\begin{equation}\label{f}
f(z)=\frac{1}{2}+c_{1}z^{-1}+c_{2}z^{-2}+c_{3}z^{-3}+\cdots ,
\end{equation}
and, since $f$ is positive real, 
\begin{equation}
\Phi(z):=f(z)+f(z^{-1})=\sum_{k=-\infty}^{\infty}c_{k}z^{-k}>0 \quad z\in \mathbb{T},
\end{equation}
where $\mathbb{T}$ is the unit circle $\{ z=e^{i\theta}\mid 0\leq\theta <2\pi\}$. Hence $\Phi$ is a power spectral density, and therefore there is a minimum-phase spectral factor $w(z)$ such that 
\begin{equation}
\label{spectralfactor}
w(z)w(z^{-1})=\Phi(z).
\end{equation}
Clearly $f$ has a representation
\begin{equation}\label{ab2f}
f(z)=\frac{1}{2}\frac{b(z)}{a(z)} 
\end{equation}
where
\begin{subequations}
\begin{equation}
a(z)=z^{n}+a_{1}z^{n-1}+\cdots+a_{n}\\
\end{equation}
\begin{equation}
b(z)=z^{n}+b_{1}z^{n-1}+\cdots+b_{n}\\
\end{equation}
\end{subequations}
are Schur polynomials, i.e., polynomials with all roots in the open unit disc $\mathbb{D}$. Consequently 
\begin{equation}
\label{GG*}
w(z)w(z^{-1})=\frac{1}{2}\left[\frac{b(z)}{a(z)} +\frac{b(z^{-1})}{a(z^{-1})}\right],
\end{equation}
and therefore 
\begin{equation}
\label{G}
w(z)=\rho\frac{\sigma(z)}{a(z)},
\end{equation}
where $\rho>0$ and 
\begin{equation}\label{sigma}
\sigma(z)=z^{n}+\sigma_{1}z^{n-1}+\cdots+\sigma_{n} 
\end{equation}
is a Schur polynomial. It follows from \eqref{GG*} and \eqref{G} that
\begin{equation}
\label{ab2sigma}
a(z)b(z^{-1})+b(z)a(z^{-1})=2\rho^{2}\sigma(z)\sigma(z^{-1}).
\end{equation}
We shall represent the monic polynomials $a(z)$, $b(z)$ and $\sigma(z)$ by the $n$-vectors 
\begin{equation}
\label{absigma}
\begin{bmatrix}a_1\\a_2\\\vdots\\a_n\end{bmatrix},
\quad
\begin{bmatrix}b_1\\b_2\\\vdots\\b_n\end{bmatrix}\quad\text{and}
\quad
\begin{bmatrix}\sigma_1\\\sigma_2\\\vdots\\\sigma_n\end{bmatrix}.
\end{equation}

Following \cite{b6} we note that \eqref{ab2f} has an observable realization
\begin{equation}
f(z)=\frac{1}{2}+h'(zI-F)^{-1}g
\end{equation}
where  
\begin{subequations}
\begin{equation}\label{g}
F=J-ah',\quad  g=\frac{1}{2}(b-a),
\end{equation}
\begin{equation}\label{J}
h=\begin{bmatrix}1\\0\\\vdots\\0\end{bmatrix},
\quad
J=\begin{bmatrix}
0&1&0&\cdots&0\\
0&0&1&\cdots&0\\
\vdots&\vdots&\vdots&\ddots&\vdots\\
0&0&0&\cdots&1\\
0&0&0&\cdots&0\\
\end{bmatrix}.
\end{equation}
\end{subequations}

From stochastic realization theory  \cite[Chapter 6]{LPbook} it follows that the minimum-phase spectral factor \eqref{G} has a realization
\begin{equation}
w(z)=\rho+h'(zI-F)^{-1}k
\end{equation}
where
\begin{equation}
\rho=\sqrt{1-h'Ph},\quad k=\rho^{-1}(g-FPh)
\end{equation}
with $P$ being the minimum solution of the algebraic Riccati equation
\begin{equation}\label{ARE}
P=FPF'+(g-FPh)(1-h'Ph)^{-1}(g-FPh)' .
\end{equation}
Following the calculations in \cite{BLpartial,b6} we now see that
\begin{equation}\label{gk}
g=\Gamma Ph+\sigma-a,\quad k=\rho(\sigma-a) 
\end{equation}
and that \eqref{ARE} can be reformulated as 
\begin{equation}\label{P}
P=\Gamma(P-Phh'P)\Gamma'+gg' 
\end{equation}
where $\Gamma$ is given by 
\begin{equation}
\label{Gamma}
\Gamma =J-\sigma h' .
\end{equation}

The  rational covariance extension problem, i.e., the special case $m=0$ and $v_{0k}=c_k$ for $k=0,1,\dots, n$ in the general problem \eqref{interpolation}, amounts to finding $(a,b)$ given a partial covariance sequence $c:=(c_0,c_1,\dots,c_n)$ and a particular Schur polynomial $\sigma(z)$.  In \cite{BLpartial} it was shown that the {\em Covariance Extension Equation (CEE)}
\begin{subequations}\label{PgCCE}
\begin{equation} \label{CEE}
 P = \Gamma (P-Phh'P) \Gamma' + g(P)g(P)' 
\end{equation}
(where $^\prime$ denotes transposition) with
\begin{equation}\label{g(P)} 
g(P)= u +U\sigma + U\Gamma Ph ,
\end{equation}
\end{subequations}
where $u:=(u_1,u_2,\dots,u_n)'$  and the matrix $U$ are determined from the expansion
\begin{equation*}
\frac{z^n}{z^n + c_1 z^{n-1}+\dots + c_n}= 1 - u_1 z^{-1} - u_2z^{-2}  - \dots ,
\end{equation*}
has a unique symmeric solution $P\geq 0$ such that $h'Ph<1$. 
Moreover, for each $\sigma$ there is a unique solution of the rational covariance extension problem, and it is given by
\begin{subequations}\label{Psigma2ab}
\begin{equation}\label{a}
a=(I-U)(\Gamma Ph+\sigma)-u
\end{equation}
\begin{equation}\label{b}
b =(I+U)(\Gamma Ph+\sigma)+u
\end{equation}
\begin{equation}\label{rho}
\rho=\sqrt{1-h'Ph} ,
\end{equation}
\end{subequations}
and 
the degree of $f(z)$ equals the rank of $P$.
This rank condition is very useful in modeling, since small singular values of the solution $P$ indicates that reasonable model reduction is possible. 

One of the main results of this paper is to show that  CEE can also be used to solve the general Nevanlinna-Pick interpolation problem with degree and derivative constraints presented above by merely changing the matrix $(u,U)$.

\section{The general interpolation problem}\label{sec:general}

To simplify the problem we reformulate the problem by considering instead of $f$  
\begin{equation}
\label{varphi}
\varphi(z):=f(z^{-1})= \frac12\frac{b_{*}(z)}{a_{*}(z)},
\end{equation}
where $a_{*}(z):=z^n a(z^{-1})$ is the reversed polynomial. Since $f$ is positive real, $\varphi$ is a Carath\'eodory function mapping the unit disc $\mathbb{D}$ to the right half-plane. The new interpolation points $(z_0,z_1,\dots,z_m)$ are now obtained via the transformation $z_j^{-1}\to z_j$, and in particular $z_0=0$. Then the interpolation conditions \eqref{interpolation} becomes 
\begin{align}
\label{interpolation2}
 \frac{\varphi^{(k)}(z_{j})}{k!}=w_{jk},\quad&j=0,1,\cdots,m,   \\
    &   k=0,\cdots n_{j}-1 \notag
\end{align}
where the values $w_{jk}$ are given by
\begin{subequations}\label{w_jk}
\begin{align}
  w_{0k}  &= v_{0k}, \quad k=0,1,\cdots,n_0-1 \\
  w_{j0}  &= v_{j0}, \quad   j=1,2,\cdots,m\\
w_{jk}  &= \sum_{\ell=1}^{k}\frac{\ell!(k-\ell+1)!}{k!}s_{k}^\ell v_{j,k-\ell+1}(-1)^{k+2}(z_{j})^{2k-\ell+1}\\&\quad j=1,\cdots,m,\, k=1,\cdots,n_{j}-1 ,\notag
\end{align}
\end{subequations}
and where 
\begin{equation}
\begin{split}
&s_{k}^{1}=s_{k}^{k}=1,\quad k=1,\cdots,n_{j}-1,\\
&s_{k+1}^{\ell}=\frac{2k-\ell+2}{\ell}s_{k}^{\ell -1}+s_{k}^\ell,\quad \ell=2,\cdots,k .
\end{split}
\end{equation}
(Cf. \cite{blomqvist}.) Then, given \eqref{w_jk}, we form
\begin{equation}
\label{W}
W:=\begin{bmatrix}
W_{0}&~&~\\
~&\ddots&~\\
~&~&W_{m}
\end{bmatrix},
\end{equation}
where, for $j=0,1,\dots,m $,
\begin{equation}\label{Wj}
W_{j}=\begin{bmatrix}
w_{j0}&~&~&~\\
w_{j1}&w_{j0}&~&~\\
\vdots&\ddots&\ddots&~\\
w_{jn_{j-1}}&\cdots&w_{j1}&w_{j0}
\end{bmatrix}
\end{equation}

Next define the $n+1$-dimensional column vector
\begin{equation}\label{e}
e:=[e_{1}^{n_{0}},e_{1}^{n_{1}},\cdots,e_{1}^{n_{m}}]',
\end{equation}
where $e_{1}^{n_{j}}=[1,0,\cdots,0]\in\mathbb{R}^{n_j}$ for each $j=0,1,\dots,m$.
Moreover, set 
\begin{equation}\label{Z}
Z:=\begin{bmatrix}
Z_{0}&~&~\\
~&\ddots&~\\
~&~&Z_{m}
\end{bmatrix},\; Z_{j}=\begin{bmatrix}
z_{j}&~&~&~\\
1&z_{j}&~&~\\
~&\ddots&\ddots&~\\
~&~&1&z_{j}
\end{bmatrix}
\end{equation}
Since $\lambda(Z)<1$, the Lyapunov equation
\begin{equation}
E=ZEZ^{*}+ee^{*}
\end{equation}
has a unique solution $E$. Here $Z^*$ is the the Hermitian conjugate (transposition + conjugation). We refer  to  \cite{b16,blomqvist,b1} for the following result.

\begin{proposition}
There exists a (strict) Carath\'eodory function $\varphi$ satisfying \eqref{interpolation2}, or equivalently a strictly positive real function $f$ satisying \eqref{interpolation}, if and only if 
\begin{equation}
\label{Pick}
\Sigma =WE+EW^*
\end{equation}
is positive definite.
\end{proposition}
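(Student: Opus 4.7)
The strategy is to recognise $W$ as $\varphi(Z)$ in the Riesz functional calculus and then exhibit $\Sigma$ as a Gram matrix via the Herglotz-Riesz integral representation of $\varphi$.

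First I would identify $W$ with $\varphi(Z)$. Each block $Z_j=z_jI+N_j$ with $N_j$ nilpotent, so the analytic functional calculus gives $\varphi(Z_j)=\sum_{k=0}^{n_j-1}\frac{\varphi^{(k)}(z_j)}{k!}N_j^{k}$. Since $N_j^{k}$ has ones on the $k$-th subdiagonal, this matches the block $W_j$ in \eqref{Wj} entry by entry. In particular $W$ commutes with $Z$, a fact I will use repeatedly.

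For necessity, the Herglotz-Riesz representation gives a finite positive Borel measure $\mu$ on $\mathbb{T}$ and an $\alpha\in\mathbb{R}$ with
$$\varphi(z)=i\alpha+\int_{\mathbb{T}}\frac{\zeta+z}{\zeta-z}\,d\mu(\zeta).$$
Combining the elementary identity
$$\frac{\zeta+z}{\zeta-z}+\overline{\frac{\zeta+z'}{\zeta-z'}}=\frac{2(1-z\bar{z'})}{(1-z\bar\zeta)(1-\bar{z'}\zeta)}$$
with the Lyapunov relation $E-ZEZ^{*}=ee^{*}$ and the identification $W=\varphi(Z)$ (operationally, by expanding $E=\sum_{k\geq 0}Z^{k}ee^{*}(Z^{*})^{k}$ and integrating the functional calculus term by term), one arrives at the Gram representation
$$\Sigma=WE+EW^{*}=2\int_{\mathbb{T}}v_\zeta v_\zeta^{*}\,d\mu(\zeta),\qquad v_\zeta:=(I-\bar\zeta Z)^{-1}e,$$
which is manifestly positive semidefinite. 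For strict positivity I would use that $(e,Z)$ is observable (immediate from the Jordan form of $Z$): if $\xi^{*}\Sigma\xi=0$, then the rational function $\zeta\mapsto e^{*}(I-\zeta Z^{*})^{-1}\xi$ vanishes on $\operatorname{supp}\mu$, and under the strict Carath\'eodory hypothesis $\mu$ carries enough mass to force this rational function to vanish identically, whence $\xi=0$.

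For sufficiency, given $\Sigma>0$ one must construct a Carath\'eodory $\varphi$ matching the data. A clean route is the Sz.-Nagy-Foias commutant lifting theorem: view $Z$ as the compression of the shift onto the finite-dimensional model space associated with the Blaschke-Potapov product having zeros $z_j$ of multiplicity $n_j$, so that operators in $\{Z\}'$ correspond to interpolating analytic functions on $\mathbb{D}$; the positivity of $\Sigma$ is precisely the lifting condition, and a Cayley transform converts the resulting contractive analytic lift into the required strict Carath\'eodory function. An alternative is a confluent Schur-Nevanlinna-Pick recursion that peels off interpolation conditions one at a time via M\"obius transformations, checking at each step that positive definiteness of the reduced Pick matrix is preserved. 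Details can be found in \cite{b16,blomqvist,b1}. The main obstacle is this sufficiency direction: necessity reduces to a Gram-matrix identity once $W=\varphi(Z)$ is recognised, whereas synthesising an analytic function with prescribed Taylor data and half-plane image from positivity data requires nontrivial operator-theoretic (or algebraic) machinery, with particular care needed for the derivative conditions $n_j>1$, which demand the confluent versions of the classical simple-point arguments.
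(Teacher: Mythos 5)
The paper offers no proof of this proposition at all: immediately before the statement it writes ``We refer to \cite{b16,blomqvist,b1} for the following result,'' and these are exactly the sources you invoke. So there is no in-text argument to compare against; what you have written is a reconstruction of the standard proof, and it is sound. The identification $W=\varphi(Z)$ is precisely what the paper asserts in \eqref{varphi(Z)}, and your functional-calculus justification (nilpotent part of each Jordan block picks off Taylor coefficients $\varphi^{(k)}(z_j)/k!=w_{jk}$) is correct. The necessity argument works: using the Lyapunov relation $E-ZEZ^*=ee^*$ one checks the algebraic identity $(I-\bar\zeta Z)^{-1}ee^*(I-\zeta Z^*)^{-1}=(I-\bar\zeta Z)^{-1}E+E(I-\zeta Z^*)^{-1}-E$, which together with the Herglotz kernel identity you quote yields the Gram formula $\Sigma=2\int_{\mathbb{T}}v_\zeta v_\zeta^*\,d\mu$. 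For strict positivity your appeal to ``enough mass'' should be made precise: for a strict Carath\'eodory interpolant the boundary measure is $d\mu=\mathrm{Re}\{\varphi(e^{i\theta})\}\,d\theta/2\pi$ with positive density, hence full support; the function $\zeta\mapsto e^*(I-\zeta Z^*)^{-1}\xi$ is rational of bounded degree, so vanishing on $\mathrm{supp}\,\mu$ forces it to vanish identically, and the invertibility of $V=[e,Ze,\dots,Z^n e]$ (which the paper uses a few lines later) is exactly the controllability of $(Z,e)$ needed to conclude $\xi=0$. You correctly identify sufficiency as the nontrivial half and defer it, via commutant lifting or a confluent Schur--Nevanlinna--Pick recursion, to the same references the paper cites. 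In short: your sketch is correct and actually more informative than the paper, which contains no proof.
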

The matrix $\Sigma$ is called the {\em generalized Pick matrix}.

Since 
\begin{equation}
\label{varphi(Z)}
\varphi(Z)=\frac{1}{2}I+c_{1}Z+c_{2}Z^{2}+c_{3}Z^{3}\cdots=W
\end{equation}
\cite{Higham} and $b_{*}(Z)=2\varphi(Z)a_{*}(Z)$,
\begin{equation}
\label{ }
b_{*}(Z)e=2Wa_{*}(Z)e
\end{equation}
and consequently
$$V\begin{bmatrix}1\\b\end{bmatrix}=2WV\begin{bmatrix}1\\a\end{bmatrix}$$
where
$$V:=[e,Ze,Z^{2}e,\cdots,Z^{n}e].$$
Therefore, since $V$ is invertible, 
\begin{equation}\label{a2b}
\begin{bmatrix}
1\\b\end{bmatrix}=2V^{-1}WV\begin{bmatrix}
1\\a\end{bmatrix}
\end{equation}
From \eqref{g} we have $g=\tfrac{1}{2}(b-a)$, which implies that 
\begin{equation}\label{a2g}
\begin{bmatrix}0\\g\end{bmatrix}=T\begin{bmatrix}1\\a\end{bmatrix},
\end{equation}
where 
\begin{equation}
T=\frac{1}{2}(2V^{-1}WV-I) ,
\end{equation}
or equivalently
\begin{equation}
\label{a+g}
(I+T)\begin{bmatrix}0\\g\end{bmatrix}=T\begin{bmatrix}1\\a+g\end{bmatrix}.
\end{equation}
Now,
\begin{displaymath}
I+T=V^{-1}WV+\frac{1}{2}I=V^{-1}(W+\frac{1}{2}I)V
\end{displaymath}
is nonsingular, and therefore \eqref{a+g} and \eqref{gk} yield
\begin{equation}
\label{g2Psigma}
\begin{bmatrix}0\\g\end{bmatrix}=(I+T)^{-1}T\begin{bmatrix}1\\\Gamma Ph+\sigma\end{bmatrix}
\end{equation}
Define
\begin{align}\label{uU}
\begin{bmatrix}
u&U\end{bmatrix}:&=\begin{bmatrix}0&I_{n}\end{bmatrix}(I+T)^{-1}T\notag\\
&=\begin{bmatrix}0&I_{n}\end{bmatrix}V^{-1}DV,
\end{align}
where
\begin{equation}
\label{D}
D:=(W+\frac{1}{2}I)^{-1}(W-\frac{1}{2}I)
\end{equation}
and where $I_n$ denotes the $n\times n$ identity matrix to distinguish it from the $(n+1)\times (n+1)$ identity matrix $I$.  Then \eqref{g2Psigma} yields
\begin{equation} \label{g1}
g=u+U\sigma+U\Gamma Ph  
\end{equation}
where $u$ is an $n$ vector and $U$ an $n\times n$ matrix. Inserting \eqref{g1} into \eqref{P}, we have
\begin{equation}\label{newCCE}
\begin{split}
P=&\Gamma(P-Phh'P)\Gamma'\\
&+(u+U\sigma+U\Gamma Ph)(u+U\sigma+U\Gamma Ph)' ,
\end{split}
\end{equation}
which is precisely the Covariance Extension Equation (CEE) \eqref{PgCCE}, but now with $(u,U)$ exchanged for \eqref{uU}. Moreover, by (\ref{gk}) and (\ref{g}),
\begin{equation}\label{P2ab}
\begin{split}
a&=(I-U)(\Gamma Ph+\sigma)-u\\
b&=(I+U)(\Gamma Ph+\sigma)+u
\end{split}
\end{equation}
in harmony with \eqref{Psigma2ab}.
Let  the first column in \eqref{Wj} be denoted $(w_{j0}, w_j')'$ and form the $n$-vector 
\begin{displaymath}
w=(w_0',w_{10}, w_1',w_{20}, w_2',\dots,w_{m0}, w_m')',
\end{displaymath}
where $w_{00}=\tfrac12$ has been removed since it is a constant and not a variable.

\begin{proposition}\label{w2uUprop}
There is map $u=\omega(w)$ sending $w$ to $u$, which is a diffeomorphism. Moreover, there is a linear  map $L$ such that $U=Lu$.
\end{proposition}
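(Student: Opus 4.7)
The plan is to write $\omega$ as the composition $w\mapsto d\mapsto u$, where $d:=De\in\mathbb{C}^{n+1}$ is the concatenation of the first columns of the diagonal blocks $D_j$; this is the natural ``Toeplitz parameter'' of $D$. I will show that $w\mapsto d$ is a structured diffeomorphism and $d\mapsto u$ is a linear map which, thanks to the normalization $z_0=0$, is an isomorphism.

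\emph{The map $w\mapsto d$.} Each $W_j+\tfrac12 I$ is lower-triangular Toeplitz, and this class forms a commutative matrix algebra parametrized by its first column (isomorphic to $\mathbb{R}[x]/(x^{n_j})$). Hence both $(W_j+\tfrac12 I)^{-1}$ and $D_j=I-(W_j+\tfrac12 I)^{-1}$ are lower-triangular Toeplitz, and the induced map from the first column of $W_j$ to that of $D_j$ is triangular with non-vanishing diagonal derivatives wherever $w_{j0}+\tfrac12\neq 0$, so it is a local diffeomorphism. Because $w_{00}=\tfrac12$ is fixed, the first entry of $d$ is pinned to $0$, and $w\mapsto d$ is a diffeomorphism onto its image in the hyperplane $\{d_1=0\}$.

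\emph{The map $d\mapsto u$.} Since $D$ and $Z$ are both block-diagonal lower-triangular Toeplitz with matching block sizes, they commute. The characteristic polynomial of $Z$ is $\prod_{j=0}^{m}(x-z_j)^{n_j}$ of degree $n+1$, so Hermite interpolation (CRT for this polynomial with distinct $z_j$) produces a unique polynomial $p$ of degree $\leq n$ with $p(Z_j)=D_j$ for each $j$; equivalently, $D=p(Z)$. Because $V=[e,Ze,\ldots,Z^ne]$ realizes $Z$ on its cyclic vector $e$, the matrix $C:=V^{-1}ZV$ is the companion form of this characteristic polynomial, and $M:=V^{-1}DV=p(C)$. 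Since $C^{k-1}e_1=e_k$ for $k=1,\ldots,n+1$, the columns of $M$ form a Krylov sequence: $Me_k=C^{k-1}y$ with $y:=Me_1=V^{-1}d$. The normalization $z_0=0$ makes the first row of $V$ equal to $e_1'$, so $y_1=(Vy)_1=d_1=0$, and thus $y=(0,u')'$. Reading $[u,U]=[0,I_n]M=[0,I_n][y,Cy,\ldots,C^ny]$ then gives $u=[0,I_n]y$ (consistency) together with $U=[0,I_n][Cy,\ldots,C^ny]$; every entry of $U$ is linear in $u$, yielding a linear map $L$ with $U=Lu$. The restriction of $d\mapsto u$ to $\{d_1=0\}$ is a linear isomorphism onto $\mathbb{C}^n$ with inverse $u\mapsto V(0,u')'$, so $\omega$ is a diffeomorphism.

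The main difficulty is the structural identification $D=p(Z)$, which converts the opaque conjugation $V^{-1}DV$ into the transparent Krylov matrix $p(C)$. With this in hand, both the linearity $U=Lu$ and the diffeomorphism property follow almost algebraically, using only the normalization $z_0=0$ to pin down the otherwise free component $y_1$.
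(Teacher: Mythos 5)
Your proof is correct and takes a genuinely different, more structural route than the paper's. The paper proceeds by explicit block computation: it writes $W_j+\tfrac12 I$ in $2\times 2$ block form, applies a block--triangular inversion formula to obtain $D_j$ explicitly, reads off the smooth bijection $w_j\leftrightarrow d_j$ in closed form (its equations~\eqref{wd}), and then observes that $u=Md$ with $M$ the trailing $n\times n$ block of $V^{-1}$, and that the Toeplitz structure of $D$ makes $D$ (hence $U$, read off from $V^{-1}DV$) linear in $d$. You instead observe that each $D_j$ is a lower--triangular Toeplitz function of the Jordan block $Z_j$, so by Hermite interpolation (the $z_j$ are distinct and $\sum n_j=n+1$) there is a unique polynomial $p$ of degree $\leq n$ with $p(Z)=D$; since $e$ is a cyclic vector for the nonderogatory $Z$ and $C=V^{-1}ZV$ is its companion form, $V^{-1}DV=p(C)$ is the Krylov matrix $[y,Cy,\dots,C^n y]$ with $y=V^{-1}De$. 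The normalization $z_0=0$ then forces the first row of $V$ to be $e_1'$, pinning $y_1=0$, from which both the linearity $U=Lu$ and the invertibility of $d\mapsto u$ drop out at once. What each approach buys: the paper's computation yields the concrete inversion formulas needed later for the homotopy; yours explains structurally \emph{why} $V^{-1}DV$ decomposes so cleanly and makes the linearity of $U$ in $u$ transparent rather than a byproduct of the Toeplitz bookkeeping. One small point: you only claim the map $w_j\mapsto d_j$ is a \emph{local} diffeomorphism from the triangular Jacobian argument, but in fact $D_j=I-(W_j+\tfrac12 I)^{-1}$ has the explicit smooth inverse $W_j=(I-D_j)^{-1}-\tfrac12 I$, so it is a global diffeomorphism between the open sets where the relevant matrices are invertible; this is what is actually needed to conclude that $\omega$ is a diffeomorphism of $\mathcal{W}_+$, not just a local one.
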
 

\begin{proof}
Partition the matrix $W_j +\tfrac12 I$ as 
\begin{displaymath}
W_j +\frac12 I =\begin{bmatrix}w_{j0}+\frac{1}{2}&0\\w_j&C_j\end{bmatrix}
\end{displaymath}
and use the inversion formula 
$$\begin{bmatrix}
A&0\\
C&D
\end{bmatrix}^{-1}=\begin{bmatrix}
A^{-1}&0\\
-D^{-1}CA^{-1}&D^{-1}
\end{bmatrix}$$
to obtain
\begin{align*}
D_j:=&(W_{j}+\frac{1}{2}I)^{-1}(W_{j}-\frac{1}{2}I)\\
=&\begin{bmatrix}
 (w_{j0}+\frac{1}{2})^{-1}(w_{j0}-\frac{1}{2})&0\\
 C_j^{-1}w_j(w_{j0}+\frac{1}{2})^{-1}&C_j^{-1}(C_j-I)
  \end{bmatrix},
 \end{align*}
  Setting
  \begin{displaymath}
 \begin{bmatrix}d_{j0}\\d_j\end{bmatrix}= \begin{bmatrix}(w_{j0}+\frac{1}{2})^{-1}(w_{j0}-\frac{1}{2})\\ C_j^{-1}w_j(w_{j0}+\frac{1}{2})^{-1}\end{bmatrix}
 \end{displaymath}
 from which we have $w_j=C_j(w_{j0}+\tfrac12)d_j$ , $w_{j0}=\tfrac12(1+d_{j0})(1+d_{j0})^{-1}$ and
\begin{equation} \label{S}
S_j:=C_j^{-1}(C_j-I)=\begin{bmatrix}d_{j0}&~&~\\\vdots&\ddots&~\\d_{jn_{j}-2}&\cdots&d_{j0}\end{bmatrix}.        
\end{equation}
 Consequently, 
\begin{equation}
\label{wd}
\begin{split}
w_j&=(I-S_j)^{-1}(1-d_{j0})^{-1}d_j\\
d_j&=(w_{j0}+\tfrac12)^{-1}C_j^{-1}w_j
\end{split}
\end{equation}
Moreover
\begin{subequations}\label{DjD}
\begin{equation}\label{Dj}
D_{j}=\begin{bmatrix}
d_{j0}&~&~&~\\
d_{j1}&d_{j0}&~&~\\
\vdots&\ddots&\ddots&~\\
d_{jn_{j}-1}&\cdots&d_{j1}&d_{j0}
\end{bmatrix}
\end{equation}
and
\begin{equation}
\label{D}
D=\text{diag}\,(D_0,\dots,D_m) =(W+\tfrac{1}{2}I)^{-1}(W-\tfrac{1}{2}I).
\end{equation}
\end{subequations}
Therefore \eqref{uU} yields
\begin{equation}
\label{u}
u=Md,
\end{equation}
where $d$ is the $n$-vector satisfying
\begin{displaymath}
d=\begin{bmatrix}d_0'&d_{10}'&d_1'&\cdots&d_{m0}'&d_m'\end{bmatrix}',
\end{displaymath}
 and $M$ is the $n\times n$ matrix obtained by deleting the first row and the first column in $V^{-1}$. Since $w$ and $u$ have the same dimension $n$, the smooth maps \eqref{wd} together with \eqref{u} defines a diffeomorphic map from $w$ to $u$. Moreover, in view of \eqref{DjD}, there is a linear map $N$ such that $D=N(d)=N(M^{-1}u)$, and hence there is a linear map $L$ such that  $U=Lu$.
\end{proof}

\section{Main theorems}\label{sec: theorems}

Let $\mathcal{S}_{n}$ be the space of Schur polynomial of the form \eqref{sigma}, and let $\mathcal{P}_n$ be the $2n$-dimensional  space of pairs $(a,b)\in\mathcal{S}_{n}\times\mathcal{S}_{n}$ such that $f=b/a$ is positive real. Moreover, for each $\sigma\in\mathcal{S}_{n}$, let 
$\mathcal{P}_n(\sigma)$ be the submanifold of $\mathcal{P}_n$ for which \eqref{ab2sigma} holds. (Note that $\rho^2$ is the appropriate normalizing scalar factor once $(a,b)$ has been chosen.) It was shown in \cite{b8} that  $\{\mathcal{P}_n(\sigma)\mid \sigma\in\mathcal{S}_{n}\}$ is a {\em foliation\/} of $\mathcal{P}_n$, i.e., a family of smooth nonintersecting submanifolds, called {\em leaves}, which together cover  $\mathcal{P}_n$. Finally, let $\mathcal{W}_+$ be the space of all $w$ such that $\Sigma$ in \eqref{Pick} is positive definite.

\begin{theorem}\label{difthm1}
Let $\sigma\in\mathcal{S}_n$. Then for each $w\in\mathcal{W}_+$ there is a unique $(a,b)\in\mathcal{P}_n(\sigma)$ such that \eqref{varphi} satisfies the interpolation conditions \eqref{interpolation2} and the positivity condition \eqref{ab2sigma}. In fact, the map sending $(a,b)\in\mathcal{P}_n(\sigma)$ to $w\in\mathcal{W}_+$ is a diffeomorphism.
\end{theorem}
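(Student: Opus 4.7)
The plan is to construct the inverse map explicitly through the chain of diffeomorphisms already assembled in Section~\ref{sec:general}, and then invoke the solvability theory of the CEE developed in \cite{BLpartial} for the resulting parameters, now extended to the broader family $(u,U)$ produced from general interpolation data.

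First I would handle the forward direction, which is straightforward: given $(a,b)\in\mathcal{P}_n(\sigma)$, the function $f=b/(2a)$ is positive real and rational, and hence $\varphi$ defined by \eqref{varphi} is a Carath\'eodory function whose Taylor coefficients at $z_j$ up to order $n_j-1$ define $w_{jk}$ via \eqref{w_jk}. These coefficients depend rationally and therefore smoothly on the polynomial coefficients of $a$ and $b$. Positive definiteness of the generalized Pick matrix $\Sigma$ is automatic whenever $\varphi$ is a strict Carath\'eodory function, by Proposition~1, so the image lies in $\mathcal{W}_+$. This gives a smooth map $\Phi_\sigma : \mathcal{P}_n(\sigma) \to \mathcal{W}_+$.

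Next, for the inverse, I would exhibit the composition of three smooth maps. Given $w\in\mathcal{W}_+$, Proposition~\ref{w2uUprop} delivers $(u,U)$ smoothly from $w$ via the diffeomorphism $u=\omega(w)$ and the linear relation $U=Lu$. From $(u,U,\sigma)$, the CEE \eqref{PgCCE} should be shown to admit a unique symmetric solution $P\ge 0$ with $h'Ph<1$, and the map $(u,U)\mapsto P$ to be smooth. Finally, the explicit formulas \eqref{P2ab} produce a pair $(a,b)$, and a direct computation (the same one that was used to derive \eqref{newCCE} from \eqref{P}) verifies that $(a,b)\in\mathcal{P}_n(\sigma)$ and that the resulting $\varphi$ satisfies \eqref{interpolation2}. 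Composing these three diffeomorphisms and checking that the composition inverts $\Phi_\sigma$ would yield both the bijection and the diffeomorphism claim; smoothness of the inverse then follows either directly from the chain, or by verifying that the Jacobian of $\Phi_\sigma$ is nonsingular and applying the inverse function theorem together with properness to get global invertibility.

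The main obstacle is the middle step: establishing existence, uniqueness, and smooth dependence of the CEE solution $P$ for the extended class of parameter pairs $(u,U)$ arising from \eqref{uU}, rather than the covariance-sequence pairs originally treated in \cite{BLpartial}. The key insight is that Proposition~\ref{w2uUprop} places $(u,U)$ in diffeomorphic correspondence with $w\in\mathcal{W}_+$, so the positivity of the generalized Pick matrix $\Sigma$ should translate into precisely the structural conditions on $(u,U)$ that were used in the original proof of CEE solvability. I would therefore rework the fixed-point / topological-degree argument of \cite{BLpartial} in the more general setting, using $\Sigma>0$ as the replacement for the classical positivity of the Toeplitz covariance matrix. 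An alternative, and probably cleaner, route is to compare with the convex-optimization parameterization of $\mathcal{P}_n(\sigma)$ over $\mathcal{W}_+$ established in \cite{b1,blomqvist}: that parameterization is already known to be a diffeomorphism, and one only needs to check that the CEE-based construction here produces the same $(a,b)$, reducing the theorem to an identification of two smooth bijections between the same manifolds.
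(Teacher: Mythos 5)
Your proposed proof takes a genuinely different route from the paper's, and the primary branch of your plan has a circularity problem. The paper proves Theorem~\ref{difthm1} without any CEE machinery: it writes $\varphi$ via the Herglotz representation, differentiates to convert the interpolation conditions \eqref{interpolation2} into generalized moment conditions \eqref{momentcond} with explicit kernels $\alpha_{jk}$, and then invokes \cite[Theorem 3.4]{KLR}, a general result on moment problems with complexity constraint that directly delivers existence, uniqueness, and the diffeomorphism. Theorem~\ref{difthm1} is thus logically \emph{prior} to the CEE results; Theorem~\ref{difthm2} is then deduced \emph{from} Theorem~\ref{difthm1} (its proof explicitly uses Theorem~\ref{difthm1} to establish that the right-hand side of \eqref{ab2P} depends smoothly on $w$).

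Because of this, your primary route---map $w\mapsto(u,U)$, solve CEE for $P$, recover $(a,b)$ via \eqref{P2ab}---cannot be carried out by "invoking the solvability theory of CEE for the broader family $(u,U)$," since in this paper that solvability is precisely what is derived from Theorem~\ref{difthm1}. You correctly flag the middle step as the obstacle, but your option (a), reworking the topological-degree argument of \cite{BLpartial} for general $(u,U)$, is not a gap-filler; it is a substantial independent proof that the paper deliberately avoids. Your option (b) is sound and is really the paper's strategy in disguise: once you accept the diffeomorphism parameterization of $\mathcal{P}_n(\sigma)$ over $\mathcal{W}_+$ from the convex-optimization literature (\cite{b1,blomqvist}, or the unified version \cite{KLR} that the paper actually cites), Theorem~\ref{difthm1} follows immediately, and the subsequent check that the CEE construction reproduces the same $(a,b)$ belongs to Theorem~\ref{difthm2}, not to Theorem~\ref{difthm1}. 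If you pursue this, make the citation precise---the simple-point case of \cite{b1} does not cover derivative constraints on its own---and drop the CEE-based framing from the statement of the proof.
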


\begin{proof}
The Carath{\'e}odory function \eqref{varphi} can be written
\begin{displaymath}
\varphi(z)=\int_{-\pi}^\pi \frac{e^{i\theta}+z}{e^{i\theta}-z}\,\text{Re}\{\varphi(e^{i\theta})\}\frac{d\theta}{2\pi},
\end{displaymath}
where $(e^{i\theta}+z)(e^{i\theta}-z)^{-1}$ is a Herglotz kernel.
Moreover, differentiating we obtain
\begin{displaymath}
\varphi^{(k)}  (z)=\int_{-\pi}^\pi \frac{2e^{i\theta}}{(e^{i\theta}-z)^{k+1}}\,\text{Re}\{\varphi(e^{i\theta})\}\frac{d\theta}{2\pi}. 
\end{displaymath}
Therfore the interpolation problem can be formulated as the generalized moment problem to find the Carath{\'e}odory function \eqref{varphi} satisfying the moment conditions
\begin{equation}
\label{momentcond}
\int_{-\pi}^\pi \alpha_{jk}(e^{i\theta})\,\text{Re}\{\varphi(e^{i\theta})\}\frac{d\theta}{2\pi}=w_{jk},
\end{equation}
where
\begin{align*}
    \alpha_{j0}(z)&=\frac{z+z_j}{z-z_j}\quad j=0,1,\dots,m   \\
    \alpha_{jk}(z)&=\frac{2z}{(z-z_j)^{k+1}}\quad j=0,\dots,m, \, k=1,\dots, n_{j-1}  
\end{align*}
(see, e.g., \cite{BLkimura}). Then the statement of the theorem follows from \cite[Theorem 3.4]{KLR}.
\end{proof}

Next let $\Pi$ be the space of $n\times n$ symmetric, positive semi-definite matrices $P$ such that $h'Ph<1$. Moreover, for any fixed $\sigma\in\mathcal{S}_n$, define the rational map 
\begin{displaymath}
\Psi(w,P):=P-\Gamma(P-PhhP)\Gamma'-g(P)g(P)'
\end{displaymath}
on $\mathcal{W}_+\times\Pi$. Then the zero locus
\begin{displaymath}
\mathcal{Z}:=\Psi^{-1}(0)\subset \mathcal{W}_+\times\Pi
\end{displaymath}
is the solution set of \eqref{newCCE}. Following \cite{BFL} we define the projection
\begin{displaymath}
\pi_{\mathcal{W}_+}(w,P)=w
\end{displaymath}
restricted to $\mathcal{Z}$. Then there exists a solution to CEE if and only if $\pi_{\mathcal{W}_+}$ is surjective, and this solution is unique if and only if $\pi_{\mathcal{W}_+}$ is injective. Then we have the following counterpart of Theorem 1 in \cite{BFL}.

\begin{theorem}\label{difthm2}
The zero locus $\mathcal{Z}$ of the CEE \eqref{newCCE} is a smooth semialgebraic manifold of dimension $n$.  Morerover, $\pi_{\mathcal{W}_+}$ is a diffeomorphism between $\mathcal{Z}$ and $\mathcal{W}_+$. In particular, the CEE \eqref{newCCE} has a unique solution $P$ for each $(\sigma,w)\in\mathcal{S}_n\times\mathcal{W}_+$. Finally, the unique solution of the interpolation problem of Theorem~\ref{difthm1} is given by
\eqref{P2ab}, and 
\begin{equation}
\label{rankP=degf}
\text{rank\,} P =\text{deg\,} \varphi =\text{deg\,}f .
\end{equation}
\end{theorem}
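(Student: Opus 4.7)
The plan is to mirror the argument of \cite[Theorem 1]{BFL}, upgrading it to the general interpolation setting by using Proposition~\ref{w2uUprop} to replace the classical covariance-to-$(u,U)$ map. I would assemble $\pi_{\mathcal{W}_+}:\mathcal{Z}\to\mathcal{W}_+$ as the composition of two smooth bijections: first, the map $(w,P)\mapsto (a,b)$ into $\mathcal{P}_n(\sigma)$ built from \eqref{P2ab}, and second, the diffeomorphism $\mathcal{P}_n(\sigma)\to\mathcal{W}_+$ furnished by Theorem~\ref{difthm1}. Since $\mathcal{W}_+$ is an open subset of $\mathbb{R}^n$, once $\pi_{\mathcal{W}_+}$ is shown to be a diffeomorphism, it pulls back a smooth structure onto $\mathcal{Z}$ of dimension $n$; semialgebraicity of $\mathcal{Z}$ is immediate since $\Psi$ is polynomial in $(w,P)$, with the $(u,U)$-dependence on $w$ being rational by Proposition~\ref{w2uUprop}.

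For the forward direction $(a,b)\mapsto (w,P)$: given $(a,b)\in\mathcal{P}_n(\sigma)$, the stochastic-realization recipe reviewed in Section~\ref{sec:CCE} supplies the minimum-phase spectral factor \eqref{G} and the minimum positive semidefinite solution $P$ of the ARE \eqref{ARE} with $h'Ph<1$. The derivation in Section~\ref{sec:general} then shows that, when $(u,U)$ is extracted from the interpolation data $w$ via \eqref{uU}, this $P$ solves the CEE \eqref{newCCE}. Smoothness follows from the standard fact that solutions to Riccati equations depend smoothly on data in the strictly stable regime, together with the smoothness of the $(a,b)\to w$ map given by \eqref{varphi(Z)}.

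For the inverse $(w,P)\mapsto (a,b)$ I would apply Proposition~\ref{w2uUprop} to produce $(u,U)$ smoothly from $w$ and then use \eqref{P2ab}. The step I expect to be the main obstacle is verifying that the resulting $(a,b)$ genuinely lies in $\mathcal{P}_n(\sigma)$, which splits into two sub-claims: (i) the positivity identity \eqref{ab2sigma} holds with $\rho^2=1-h'Ph$, and (ii) $a$, and hence $b$, is a Schur polynomial. Claim (i) should follow by substituting \eqref{P2ab} and \eqref{g1} back into the CEE \eqref{newCCE} and exploiting that $\Gamma=J-\sigma h'$ encodes polynomial shifting modulo $\sigma$, reproducing \eqref{ab2sigma} at the level of coefficient vectors. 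Claim (ii) is the delicate one: Schurness of $a$ is equivalent to strict stability of $F=J-ah'$, which I would establish by invoking the classical result of \cite[Chapter 6]{LPbook} that the \emph{minimum} positive semidefinite solution of an ARE of this form corresponds to the minimum-phase spectral factor and hence to a stable companion matrix; the strict inequality $h'Ph<1$ is what rules out unit-circle eigenvalues of $F$. Schurness of $b$ then follows from \eqref{ab2sigma} and Schurness of $\sigma$.

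Finally, the identity $\text{rank\,}P=\text{deg\,}\varphi=\text{deg\,}f$ is inherited directly from the stochastic realization theory recalled at the end of Section~\ref{sec:CCE}: $P$ is the state covariance of the realization $(F,k,h,\rho)$ of the minimum-phase spectral factor, so its rank equals the McMillan degree of $w$, which coincides with $\text{deg\,}\varphi=\text{deg\,}f$ because $f$ and $w$ share the denominator $a(z)$ up to common factors. Once the diffeomorphism is in place, the asserted uniqueness of $P$ for each $(\sigma,w)\in\mathcal{S}_n\times\mathcal{W}_+$ is immediate from injectivity of $\pi_{\mathcal{W}_+}$.
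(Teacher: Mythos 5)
Your overall strategy matches the paper's: factor $\pi_{\mathcal{W}_+}$ through $\mathcal{P}_n(\sigma)$, use a Riccati/Lyapunov reformulation to get smoothness, invoke Theorem~\ref{difthm1} for the $(a,b)\leftrightarrow w$ correspondence, and cite prior work for the rank identity. The difference lies in how the two proofs handle the inverse map and the Schurness of $a$, and this is where your proposal has a genuine gap.

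You correctly flag item (ii) --- Schurness of $a$ --- as the obstacle, but the route you propose is circular. You want to deduce that $a$ is Schur (equivalently, that $F=J-ah'$ is stable) by appealing to the fact that the \emph{minimum} positive semidefinite solution of the ARE \eqref{ARE} corresponds to the minimum-phase spectral factor. The difficulty is that \eqref{ARE} is written in terms of $F=J-ah'$ and $g=\tfrac12(b-a)$, i.e., it already presupposes that $(a,b)$ --- and hence the very polynomial $a$ whose Schurness you need --- are known and Schur. Starting from an arbitrary pair $(w,P)\in\mathcal{Z}$ you only know that $P$ solves the CEE \eqref{newCCE}, which is phrased in terms of $\Gamma=J-\sigma h'$ and $(u,U)$; you do not yet know that the $(a,b)$ produced by \eqref{P2ab} are Schur, so you cannot legitimately identify $P$ as ``the minimum ARE solution'' for those $(a,b)$. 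The classical theory in \cite[Ch.~6]{LPbook} takes a stable $F$ as given and solves the quadratic equation for $P$; here $F$ is itself a function of $P$, so the standard statement does not apply directly.

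The paper avoids this altogether by going in the other direction. It cites \cite{BLpartial} for the equivalence of the CEE \eqref{newCCE} with the \emph{linear} Lyapunov equation \eqref{ab2P} in which $(a,b)$ appear on the right-hand side, and then constructs the inverse of $\pi_{\mathcal{W}_+}$ as $w\mapsto(a,b)\mapsto P$: first use Theorem~\ref{difthm1} to obtain the unique $(a,b)\in\mathcal{P}_n(\sigma)$ interpolating $w$ (so Schurness and positive realness are built in by Theorem~\ref{difthm1}, no separate verification needed), then solve \eqref{ab2P}, which has a unique solution since $J$ is nilpotent hence stable. Both maps are smooth ($\rho^2$ depends smoothly on $(a,b)$ via \eqref{ab2sigma}, and the Lyapunov solution depends smoothly on its right-hand side), so $\pi_{\mathcal{W}_+}^{-1}$ is smooth. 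Since $\mathcal{Z}$ is then the graph of this smooth map, it is a smooth $n$-dimensional manifold, and $\pi_{\mathcal{W}_+}$ is a diffeomorphism. The rank identity \eqref{rankP=degf} is cited from \cite{BLpartial} rather than re-derived. In short: the paper pushes the Schurness burden onto Theorem~\ref{difthm1} and the \cite{BLpartial} equivalence, keeping everything linear; your proposal tries to re-derive Schurness from the Riccati equation itself, where the nonlinearity makes the argument circular. Your semialgebraicity observation (the CEE is rational in $(w,P)$ via Proposition~\ref{w2uUprop}) is a nice point that the paper's proof actually leaves implicit.
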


\begin{proof}
First note that any solution $P$ of \eqref{newCCE}  is completely determined by the $n$-vector $p:=Ph$, so the dimension of the space $\Pi$ is $n$. It was shown in \cite{BLpartial} that \eqref{newCCE} can be reformulated as 
\begin{equation}
\label{ab2P}
P-JPJ'= -\frac12(ab' +ba')+\rho^2\sigma\sigma,
\end{equation}
where $a$ and $b$ are given by \eqref{Psigma2ab}. Note that this is independent of the fact that our new problem has different $(u,U)$. Since $J$ is a stability matrix, there is a unique solution $P$ for each $(a,b)\in\mathcal{P}_n(\sigma)$. The normalization factor $\rho^2$ is a smooth function of $(a,b)\in\mathcal{P}_n(\sigma)$ via \eqref{ab2sigma}. Therefore, by Theorem~\ref{difthm1}, the right member of \eqref{ab2P} is a smooth function of $w\in\mathcal{W}_+$, and, by elementary theory for the Lyapunov equation, so is  $P$. Consequently, $\pi_{\mathcal{W}_+}^{-1}$ is smooth, and since $\pi_{\mathcal{W}_+}$ is also smooth, it is a diffeomorphism. Moreover, since $\mathcal{Z}$ is the graph in  $\mathcal{W}_+\times\Pi$ of a smooth map defined on $\mathcal{W}_+$, it is a smooth manifold of the same dimension as $\mathcal{W}_+$, namely $n$. Finally, \eqref{rankP=degf} was established in \cite{BLpartial}. 
\end{proof}

\section{Solving CEE by homotopy continuation}\label{sec:homotopy}

The problem at hand is to solve the Covariance Extension Equation (CEE) \eqref{newCCE}
for the case that $u=\omega(w)$ is  a diffeomorphic function of the data $w$ and $U=Lu$, where $L$ is a linear map (Proposition~\ref{w2uUprop}).
If $u=0$, CEE takes the form
\begin{equation}
\label{CCEu=0}
P = \Gamma (P-Phh'P) \Gamma',
\end{equation}
which has the unique solution $P=0$. We would like to make a continuous deformation of $u$ to go between the solutions of \eqref{newCCE} and \eqref{CCEu=0}. To this end, we choose
\begin{equation}
\label{deformation}
u(\lambda)=\lambda u, \quad \lambda\in [0,1]. 
\end{equation}

\begin{proposition}
Let $\omega$ be the diffeomorphism in Proposition~\ref{w2uUprop}. Then $w(\lambda):=\omega^{-1}(\lambda u)\in\mathcal{W}_+\,$ for all  $\lambda\in [0,1]$.
\end{proposition}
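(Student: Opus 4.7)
The plan is to re-express the positivity condition $\Sigma=WE+EW^{*}>0$ defining $\mathcal{W}_{+}$ in terms of the Cayley-type variable $D=(W+\tfrac12 I)^{-1}(W-\tfrac12 I)$, because in those coordinates the homotopy becomes linear and the claim nearly transparent. First I would observe that, by Proposition~\ref{w2uUprop}, $u=Md$ with $M$ invertible, so the homotopy $u\mapsto\lambda u$ is equivalent to $d\mapsto\lambda d$. Since each block $D_j$ in \eqref{DjD} is a Toeplitz matrix whose nonzero parameters are components of $d$, this scaling passes linearly to $D$, giving $D(\lambda)=\lambda D$ and hence $W(\lambda)=\tfrac12(I+\lambda D)(I-\lambda D)^{-1}$.

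The main step, and really the only one requiring any computation, is to establish the Cayley-type identity
\[
(I-D)\,\Sigma\,(I-D^{*})=E-DED^{*}.
\]
This is obtained by substituting $W=\tfrac12(I+D)(I-D)^{-1}$, using that $(I+D)$ and $(I-D)$ commute as polynomials in $D$ (so that $(I-D)(I+D)(I-D)^{-1}=I+D$), and cancelling the cross terms $\pm DE\mp ED^{*}$. Applied with $D$ replaced by $\lambda D$, the identity yields $(I-\lambda D)\,\Sigma(\lambda)\,(I-\lambda D^{*})=E-\lambda^{2}DED^{*}$, so $\Sigma(\lambda)>0$ will follow once I verify (i) invertibility of $I-\lambda D$ and (ii) positive definiteness of $E-\lambda^{2}DED^{*}$.

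Item (ii) is immediate from
\[
E-\lambda^{2}DED^{*}=(1-\lambda^{2})E+\lambda^{2}(E-DED^{*}),
\]
which is a convex combination of $E\succ 0$ (positive definite by controllability of $(Z,e)$, itself visible from invertibility of $V$) and $E-DED^{*}\succ 0$ (the identity at $\lambda=1$, where $\Sigma>0$ holds by hypothesis). For item (i), the block-diagonal structure of $D$ with lower triangular Toeplitz blocks $D_j$ reduces invertibility of $I-\lambda D$ to the scalar conditions $\lambda d_{j0}\neq 1$; since $d_{j0}=(w_{j0}-\tfrac12)/(w_{j0}+\tfrac12)$ satisfies $|d_{j0}|<1$ iff $\mathrm{Re}\,w_{j0}>0$, it suffices to note that the $(1,1)$-entry of the $(j,j)$ principal block of $\Sigma$ equals $2\,\mathrm{Re}(w_{j0})\,E_{jj}(1,1)$, which is positive whenever $\Sigma>0$. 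Assembling these pieces yields $\Sigma(\lambda)>0$ for every $\lambda\in[0,1]$, i.e.\ $w(\lambda)\in\mathcal{W}_{+}$.
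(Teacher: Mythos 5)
Your argument follows the paper's route exactly: both pass to the Cayley-type variable $D$, reduce the homotopy to $D\mapsto\lambda D$, derive the identity $(I-\lambda D)\,\Sigma(\lambda)\,(I-\lambda D^{*})=E-\lambda^{2}DED^{*}$, and conclude positivity from the monotone (equivalently, convex-combination) dependence on $\lambda$ together with $E-DED^{*}>0$ at $\lambda=1$. You are in fact somewhat more careful than the published proof, which silently inverts $I-\lambda D$; your observation that the eigenvalues of $D$ are the $d_{j0}$ with $|d_{j0}|<1$ (forced by $\mathrm{Re}\,w_{j0}>0$, itself read off the diagonal of $\Sigma>0$) closes that small gap.
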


\begin{proof}
It follows from  \eqref{D} that $W=(I-D)^{-1}-\tfrac12 I$, and therefore the corresponding deformation is 
\begin{displaymath}
W(\lambda)=(I-\lambda D)^{-1}-\frac12 I . 
\end{displaymath}
We want to show that $W(\lambda)$ satisfies $\Sigma >0$ in \eqref{Pick} for all  $\lambda\in [0,1]$. To this end, we note that a straightforward calculation yields
\begin{align*}
\label{lambdaPick}
    \Sigma(\lambda)&:=W(\lambda)E+EW(\lambda)^*    \notag \\
    & =(I-\lambda D)^{-1}(E-\lambda^2 DED^*)(I-\lambda D^*)^{-1} .
\end{align*}
However, $E-\lambda^2 DED^* \geq E- DED^* >0$ for all  $\lambda\in [0,1]$, and consequently $\Sigma(\lambda)>0$ as claimed.
\end{proof}

Consequently the equation 
\begin{equation*} \label{CEElambda}
 \hat{H}(P,\lambda) := P - \Gamma (P-Phh'P) \Gamma' - g(P,\lambda)g(P,\lambda)' =0
\end{equation*}
 with
\begin{equation*}\label{g(P)} 
g(P,\lambda)= u(\lambda) +U(\lambda)\sigma + U(\lambda)\Gamma Ph
\end{equation*}
has a unique symmetric, positive semidefinite solution $P(\lambda)$ with the property $h'P(\lambda)h<1$. 
The function $\hat{H}$ sending $(P,\lambda)$ to $\mathbb{R}^{n\times n}$ is a homotopy between \eqref{newCCE} and \eqref{CCEu=0}.  By Theorem~\ref{difthm2}, the trajectory $\{P(\lambda)\mid \lambda\in [0,1]\}$ is continuously differentiable and has no turning points and bifurcations \cite{Alexander}. This allows us to use  homotopy continuation to construct a computational procedure.

However, once $p:=Ph$ is known, CEE reduces to a Lyapunov equation of the type
$P=\Gamma P\Gamma' + Q(p)$,
which has a unique solution since $\Gamma$ is a stability matrix. Therefore \eqref{PgCCE} can be reduced from an algebraic equation with $\tfrac12 n(n+1)$ variables to one with $n$. In fact,
multiplying \eqref{ab2P} by $z^{j-i}$ and summing over all $i,j=1,2,\dots,n$ we recover \eqref{ab2sigma}, which in matrix form can be written
\begin{equation}
\label{redequ}
S(a)\begin{bmatrix}1\\b\end{bmatrix}=2(1-h'p)\begin{bmatrix}s\\\sigma_n\end{bmatrix} ,
\end{equation}
where
\begin{equation*}
S(a)=
\begin{bmatrix}
1&\cdots&a_{n-1}&a_{n}\\
a_{1}&\cdots&a_{n}\\
\vdots&\iddots\\
a_{n}
\end{bmatrix}+\begin{bmatrix}
1&a_{1}&\cdots&a_{n}\\
~&1&\cdots&a_{n-1}\\
~&~&\ddots&\vdots\\
~&~&~&1
\end{bmatrix}
\end{equation*}
and
\begin{equation*}
s=\begin{bmatrix}
1+\sigma_{1}^{2}+\sigma_{2}^{2}+\cdots+\sigma_{n}^{2}\\
\sigma_{1}+\sigma_{1}\sigma_{2}+\cdots+\sigma_{n-1}\sigma_{n}\\
\vdots\\
\sigma_{n-1}+\sigma_{1}\sigma_{n}
\end{bmatrix}.
\end{equation*}
However the last of the $n+1$ equations \eqref{redequ} is redundant \cite{BFL} and can be removed. Then we are left with $n$ equations
\begin{equation}\label{reducedCCE}
\begin{bmatrix}I_n&0\end{bmatrix}S(a)\begin{bmatrix}1\\b\end{bmatrix}=2(1-h'p)s
\end{equation}
in $n$ variables $p_1,p_2,\dots, p_n$.

Therefore we shall instead use the homotopy 
\begin{equation}
\label{ }
\begin{split}
H(p,\lambda):= &\begin{bmatrix}I_n&0\end{bmatrix}S(a(p,\lambda))\begin{bmatrix}1\\b(p,\lambda)\end{bmatrix}\\
&-2(1-h'p)s =0
\end{split}
\end{equation}
where
\begin{subequations}\label{p2ab}
\begin{equation}
a(p,\lambda)=(I-\lambda U)(\Gamma p+\sigma)-\lambda u
\end{equation}
\begin{equation}
b(p,\lambda) =(I+\lambda U)(\Gamma p+\sigma)+\lambda u ,
\end{equation}
\end{subequations}
which also has a unique solution  $p(\lambda)$ for all $\lambda\in [0,1]$.

From the implicit function theorem we obtain the differential equation
\begin{equation}
\label{diffequ}
\frac{dp}{d\lambda}=\left[\frac{\partial H(p,\lambda)}{\partial p}\right]^{-1}\frac{\partial H(p,\lambda)}{\partial\lambda}, \quad p(0)=0,
\end{equation}
where
\begin{align*}
 \frac{\partial H(p,\lambda)}{\partial\lambda}  
   & =\begin{bmatrix}I_n&0\end{bmatrix}(S(a(p,\lambda))-S(b(p,\lambda)))\begin{bmatrix}0\\g(p,1)\end{bmatrix} \\
\frac{\partial H(p,\lambda)}{\partial p}  
&=\begin{bmatrix}I_n&0\end{bmatrix}(S(a(p,\lambda))+S(b(p,\lambda)))\begin{bmatrix}0\\\Gamma\end{bmatrix}+2hs'\\
&+\begin{bmatrix}I_n&0\end{bmatrix}(S(a(p,\lambda))-S(b(p,\lambda)))\begin{bmatrix}0\\ \lambda U\Gamma\end{bmatrix}
\end{align*}
and
\begin{equation}\label{g(p)} 
g(p,\lambda)= u(\lambda) +U(\lambda)\sigma + U(\lambda)\Gamma p. 
\end{equation}
The differential equation \eqref{diffequ} has a unique solution $p(\lambda)$ on the interval $\lambda\in[0,1]$, and the unique solution of the Lyapunov equation
\begin{equation}
\begin{split}
&P-\Gamma P\Gamma'=-\Gamma p(1)p(1)'\Gamma'+\\
&\qquad (u+U\sigma+U\Gamma p(1))(u+U\sigma+U\Gamma p(1))'
\end{split}
\end{equation}
is the unique solution of \eqref{newCCE}.
To solve the differential equation \eqref{diffequ} we use  predictor-corrector steps \cite{AllgowerGeorg}. We leave the details of this to another paper.

\subsection*{A numerical example}

To illustrate our numerical procedure and demonstrate its robustness and efficiency we consider  a problem where the system have poles close to the unit circle, a situation for which methods based on convex optimization has had problems. 
{\small
Given the eight pairs of interpolation data \cite{Nagamune robust solver}
\begin{equation*}
\begin{split}
\{z_{0},\cdots,z_{n}\}=&\{\infty,0.8709-0.8967i,0.8709+0.8967i,\\
&0.3344-1.2044i,0.3344+1.2044i,1.1,\\
&-0.6474+0.8893i,-0.6474-0.8893i\}\\
\{w_{0},\cdots,w_{n}\}=&\{0.5,0.7973+0.2568i,0.7973-0.2568i,\\
&0.5451 + 0.3645i,0.5451 - 0.3645i,0.7693,\\
&0.7693 + 0.7693i,0.7693 - 0.7693i\} ,
\end{split}
\end{equation*}
}
for which \eqref{Pick} is positive definite, and the spectral zeros
$\{0.95e^{\pm1.22i},0.95e^{\pm2.3i},\pm0.99i,-0.99\}$, defining $\sigma$, 
we obtain the unique solution $f(z)=b(z)/2a(z)$ of degree 7 with
\begin{equation*}
\begin{split}
b(z)=&z^{7}-1.364z^{6}+1.112z^{5}-0.3812z^{4}\\
&-0.4479z^{3}+1.119z^{2}-1.412z+0.8781\\
a(z)=&z^{7}-1.771z^{6}+1.815z^{5}-1.205z^{4}\\
&1.28z^{3}-1.814z^{2}+1.773z-0.8775\\
\end{split}
\end{equation*}

Fig.~\ref{figure1} shows how the trajectories of the poles, i.e., the zeros of $a(p(\lambda))$, move as $\lambda$ varies from $0$ to $1$. The poles for $\lambda =0$ are marked with circles and the ones for $\lambda=1$ by $\times$. 
\begin{figure}[thb!]
  \centering
  \includegraphics[width = 0.5\linewidth]{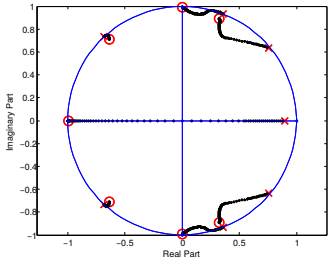}
  \caption{The trajectories of the poles}
  \label{figure1}
\end{figure}
Several of the zeros of $a(p(1))$, i.e., the poles of the final solution, are seen to be situated 
very close to the unit circle. This is a situation that is hard to solve numerically by the usual convex optimization methods. 

\section{Some applications to systems and control}\label{sec:applications}


\subsection{Spectral estimation with model reduction}\label{modred}
Generate an observed  time series $y_{0},y_{1},y_{2},\cdots,y_{N}$ by passing normalized white noise through a filter with the transfer function $w(z)=\sigma(z)/a(z)$ and then in turn passing this time series through the bank of filters 
\begin{figure}[!htp]
  \centering
  \includegraphics[width = 0.7\linewidth]{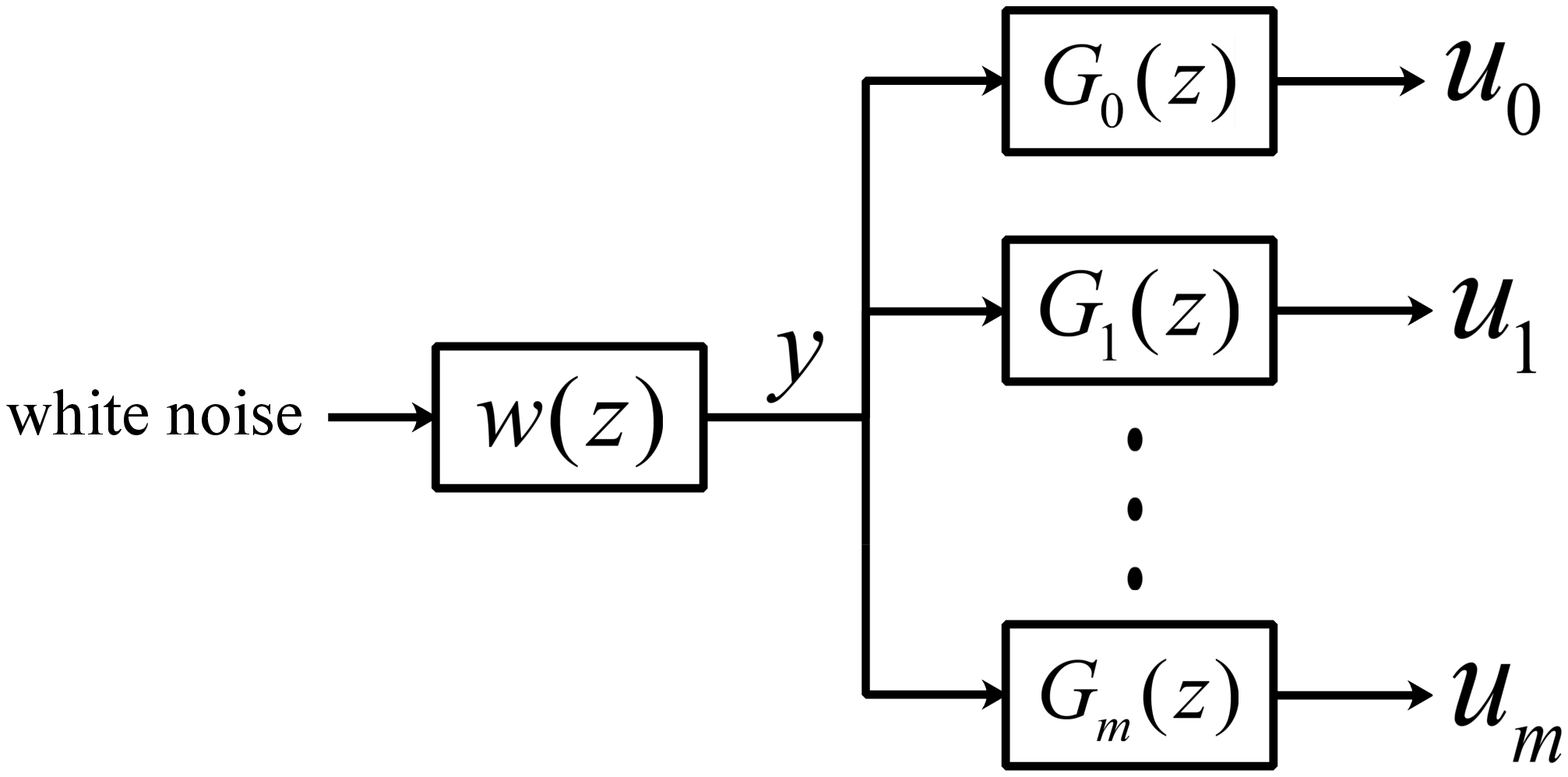}
  \label{procedure}
\end{figure}
\begin{equation}
G_{j}(z)=z(zI-Z_{j})^{-1}e^{n_{j}},\quad j=0,1,\cdots,m .
\end{equation}

where $Z_j$ is given by \eqref{Z}. 
The covariance matrix $\Sigma:=\mathbb{E}\{u(t)u^{*}(t)\}$ can be estimated from the observed output of the bank of filters, and then the matrix $W$ in \eqref{W} can be estimated from the Lyapunov equation
\begin{displaymath}
WE+EW^{*} =\Sigma.
\end{displaymath}
Cf. \eqref{Pick}, where $\Sigma$ is a state covariance \cite{b16}. After estimating  $W$ from data, we then apply our algorithm to solve the corresponding problem \eqref{interpolation}. 
We choose a transfer function $w(z)$ of degree six with the zeros at $0.92e^{\pm1.5i},0.49e^{\pm1.4i},0.95e^{\pm2.5i}$ and poles at $0.8e^{\pm2.1i},0.83e^{\pm1.34i},0.76e^{\pm0.8i}$.  Determining $W$ from the bank of filters, our method produces the  power spectral density shown in Fig.~\ref{figure5}, which is almost identical to the true one (also depicted). From the left picture in Fig.~\ref{figure6} we see that there is no close zero-pole cancellation.
However, the singular values of $P$ are
\begin{equation*}
2.0170,\, 0.4184,\, 0.02585,\, 0.01858,\, 0.005741,\, 0.002466 ,
\end{equation*}
where the last two are vey small, so the positive degree is close to four. Therefore, using the dominant spectral zeros at $0.92e^{\pm1.5i},0.95e^{\pm2.5i}$ only, the singular values becomes
\begin{equation*}
1.2205,\, 0.2913,\, 0.01605,\, 0.02563 .
\end{equation*} 
The estimated spectral density of the reduced order system of degree four is depicted in Fig.~\ref{figure5} and shows little difference from the one of degree six. 
\begin{figure}[thb!]
  \centering
  \includegraphics[width = 0.7\linewidth]{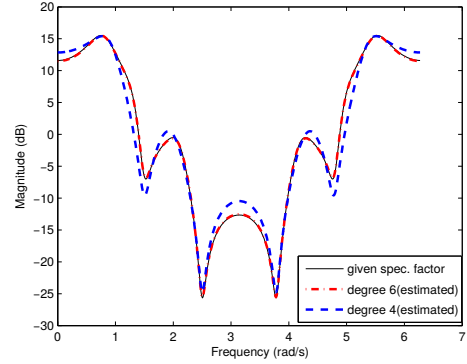}
  \caption{Estimated spectral densities and the true one}
  \label{figure5}
\end{figure}
However, the location of zeros and poles for the reduced-order system, shown to the right in Fig.~\ref{figure6}, are quite different from those of the 6-order system. 

\subsection{Robust control}\label{robustex}
Consider the feedback configuration
\begin{figure}[thb!]
\centering
\includegraphics[width=0.7\linewidth]{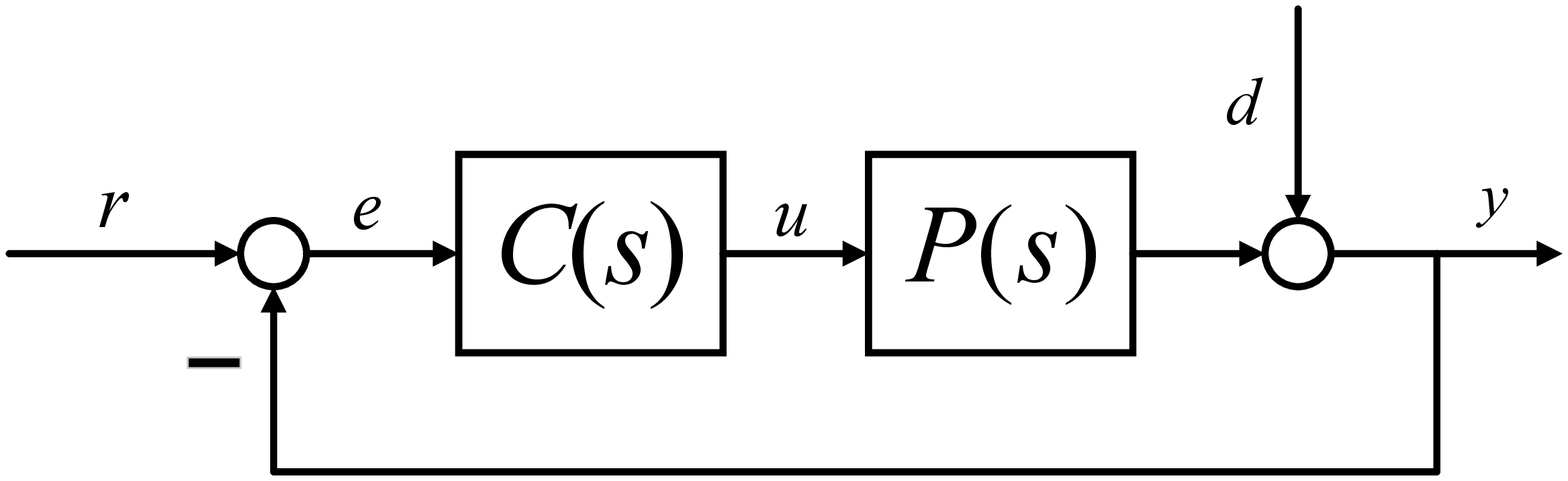}
\label{robust}
\end{figure}
\begin{figure}[thb!]
  \centering
  \includegraphics[width = 0.4\linewidth]{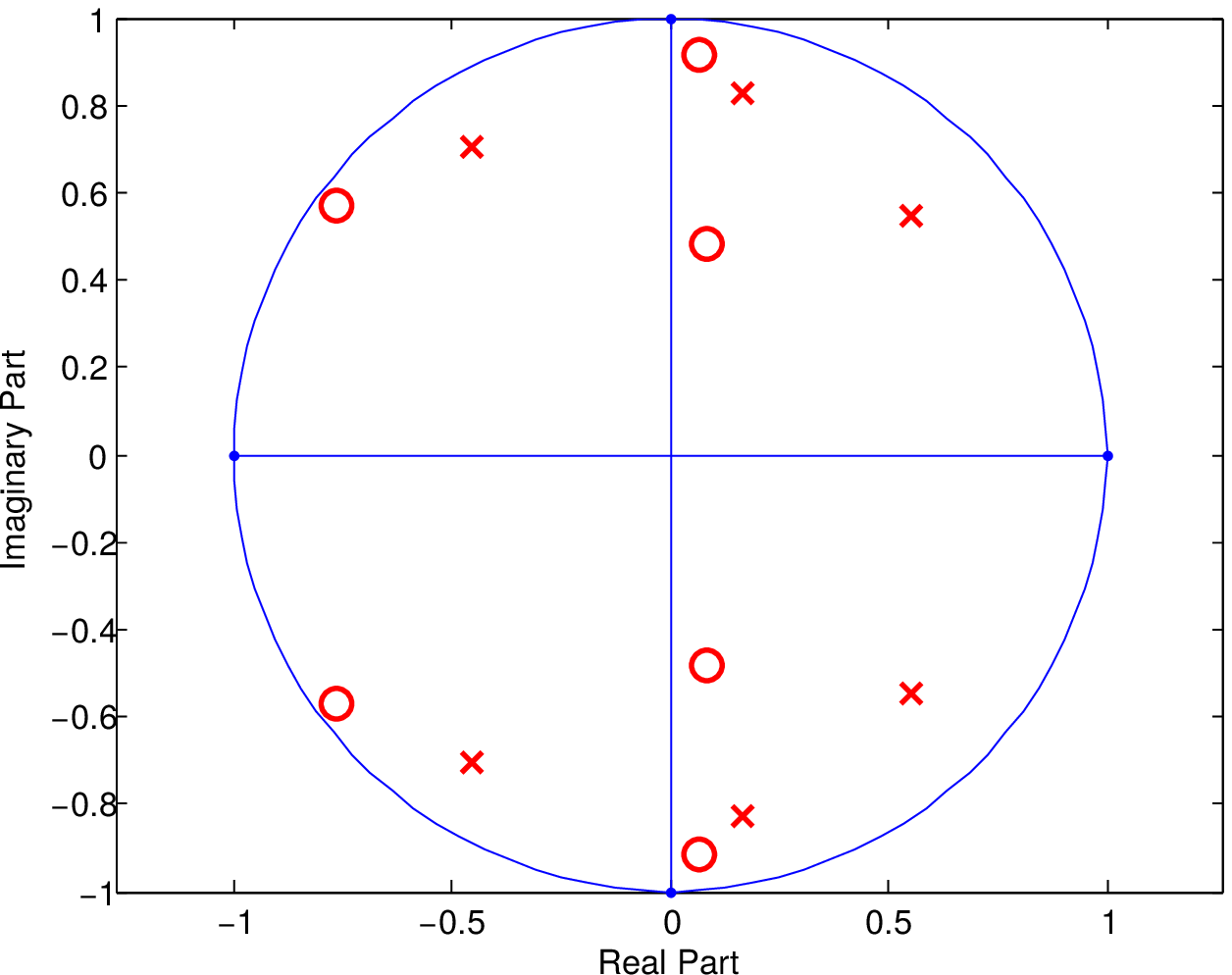}
  \label{figure3}
\quad
  \includegraphics[width = 0.4\linewidth]{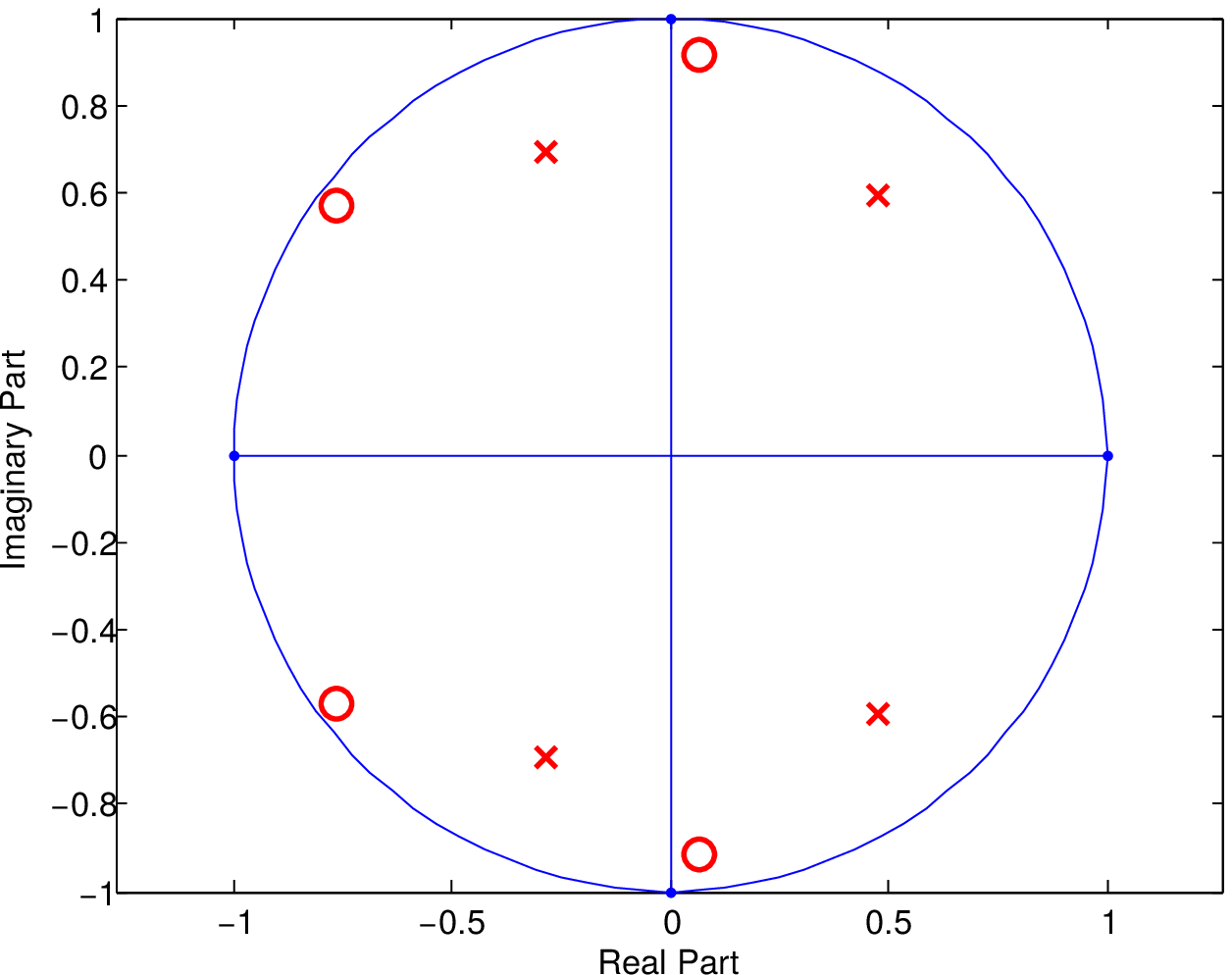}
  \caption{Zeros $ (\circ) $ and poles $ (\times) $ of original estimated system (left) and the reduced-order one (right).}
  \label{figure6}
\end{figure}

where $ r$ is the reference input and $d$ is the disturbance on the output $y$. Given an unstable plant 
\begin{displaymath}
P(s)=\frac{-8s^{2}+62s+200}{10s^{4}+8s^{3}+7s^{2}+0.5s},
\end{displaymath}

design a strictly proper controller $C(z)$ such that the feedback system satisfies the design specifications: (i) The system is internally stable. For a step reference $r$, (ii) the settling time is about 8 seconds, (iii)  the overshoot is at most 10$\%$, and (iv) the control signal $u(t)$ has magnitude  at most 0.5.
This design problem is similar to the one considered in \cite{DFT} using the classical central solution and in \cite{blomqvist} using a homotopy method to solve the convex optimization problem. Here we show how to shape the frequency response of the sensitivity function $S(s):=(1+P(s)C(s))^{-1}$ by just changing the spectral zeros.

The plant has one unstable pole at $s=0$, and two unstable zeros at $s=\infty$ and $s=10.2008$ with multiplicity two and one respectively.  Therefore the sensitivity function must satisfy the interpolation conditions
\begin{displaymath}
S(0)=0,\,  S(\infty)=1,\,S'(\infty)=0, \, S(10.2008)=1.
\end{displaymath}
Moreover, to ensure that $C$ is strictly proper we must have
\begin{displaymath}
S''(\infty)=0.
\end{displaymath}
See, e.g., \cite{DFT}. From the design specifications (ii) and (iii) we  can obtain an approximately ideal sensitivity function
\begin{equation}\label{Sideal}
S_{idel}(s)=\frac{s(s+0.9)}{s^{2}+0.9s+0.75^2}
\end{equation}
of second order. However, \eqref{Sideal} cannot be used since it does not satisfy all the interpolation conditions. For disturbance attenuation we also need a condition 
\begin{equation*}
\lVert S \rVert_{\infty}<\gamma .
\end{equation*}

Using the M{\"o}bius transformation  $z=\tfrac{10}{9}(1+s)(1-s)^{-1}$,
which maps the points in the right half plane into the exterior of the unit disc, the problem is reduced to finding a  function $f(z)=(\gamma+S(z))(\gamma-S(z))^{-1}$ that is positive real and
satisfies
\begin{equation*}
\begin{split}
&f(\tfrac{10}{9})=1,\quad f(-1.3526)=\frac{\gamma+1}{\gamma-1}\\
&f(-\tfrac{10}{9})=\frac{\gamma+1}{\gamma-1},\quad f'(-\tfrac{10}{9})=0,\quad f''(-\tfrac{10}{9})=0
\end{split}
\end{equation*}
Since there are five interpolation conditions, we can construct an interpolant of degree four by choosing four spectral zeros. We choose $\gamma=1.8$ and spectral zeros at $\pm 0.9i,5,\infty$. (More details on how to choose these parameters can be found in \cite{Nagamune R}).
Our computational procedure lead to the controller
\begin{equation*}
C(s)=\frac{6.986 s^3 + 5.589 s^2 + 4.89 s + 0.3493}{s^4 + 21.43 s^3 + 144.9 s^2 + 336.2 s + 233}.
\end{equation*}
The settling time is $ 6.55\, s$, the overshoot is 8.86$\%$, and the largest magnitude of $u$ is $0.13$, which all satisfy the design specifications. 
Fig.~\ref{robust1} shows the frequency response of $S_{ideal}$ and $S_{computed}$, which show little  difference. 
\begin{figure}[thb!]
\centering
\includegraphics[width=0.7\linewidth]{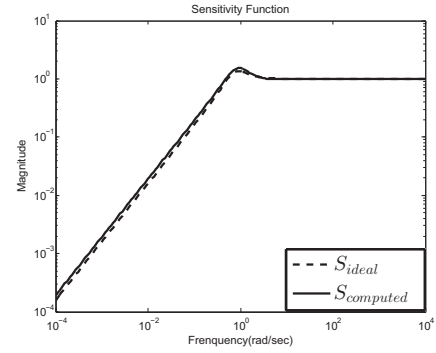}
\caption{Frequency responses of $S_{ideal}$ and $S_{computed}$}
\label{robust1}
\end{figure}

\subsection{Comparing our approach to other methods}
Our new method shares the advantage of the convex optimization methods initiated in \cite{BGuL,b1} in that the solutions can be smoothly tuned by choice of spectral zeros. The classical method \cite{DFT} produces a controller of degree 8 for our robust control example in Section~\ref{robustex}, whereas ours is degree 4 and the design specifications are satisfied with larger margins. However, solving the convex optimization problem by Newton's method when the system has poles close to the unit circle (a common situation) is problematic.  This disadvantage was overcome in \cite{Enqvist,Nagamune robust solver} by solving the optimization problem using homotopy from an initial solution. Our method has the additional advantage that there is no need to determine an initial solution. Moreover, as illustrated in Section~\ref{modred}, in our method one can directly detect the possibility of model reduction by simply checking the (approximate) rank of the solution $P$ of the Riccati-type equation \eqref{newCCE}.

\balance

{\small

}

\begin{thebibliography}{0}
\addtolength{\itemsep}{-0.8 em}
\bibitem{Kalman-81}
Kalman, R.~E., Realization of covariance sequences, {\em Proc. Toeplitz Memorial Conference}, Tel Aviv, Israel, 1981.

\bibitem{Gthesis}
Georgiou, T.~T., {\em Partial realization of covariance sequences},  PhD thesis, {CMST}, Univ. Florida, 1983.

\bibitem{G87}
Georgiou, T.~T., Realization of power spectra from partial covariances, 
{\em IEEE Trans. Acoustics, Speech and Signal Processing},  35:438--449, 1987.

\bibitem{BLGuM}
Byrnes, C. I. and  Lindquist, A., Gusev, S. V. and Matveev, A. S.,
A complete parameterization of all positive rational extensions of a covariance sequence,
 {\em IEEE Trans. Automatic Control}, 1995, 40:1841--1857.
 
 \bibitem{BLpartial}
Byrnes C I and Lindquist, A., On the partial stochastic realization problem, {\em IEEE Trans. Automatic Control}, 1997, 42(8): 1049-1070.

\bibitem{BGuL}
Byrnes, C.~I., Gusev, S.~V.  and Lindquist, A., 
A convex optimization approach to the rational covariance extension problem.
{\em SIAM J. Control and Optimization}, 37:211--229, 1999.

\bibitem{b12} Delsarte P, Genin Y and Kamp Y,  Speech modelling and the trigonometric moment problem, {\em Philips J. Res}, 1982, 37(5/6): 277-292.

\bibitem{LPbook}
Lindquist A and Picci G., {\em Linear stochastic systems: A Geometric Approach to Modeling, Estimation and Identification}, Springer, 2015.


\bibitem{b13} Lindquist A and Picci G. Canonical correlation analysis, approximate covariance extension, and identification of stationary time series, {\em Automatica}, 1996, 32(5): 709-733.

\bibitem{b15} Georgiou T T, A topological approach to Nevanlinna-Pick interpolation, {\em SIAM J. Math. Anal.}, 1987, 18(5): 1248-1260.

\bibitem{b1} Byrnes CI, Georgiou TT and Lindquist A,  A generalized entropy criterion for Nevanlinna-Pick interpolation with degree constraint, {\em IEEE Trans. Autom. Contr.}, 2001, 46: 822-839.


\bibitem{BLkimura}
Byrnes C I, and Lindquist, A., A convex optimization approach to generalized moment problems, in  {\em Control and Modeling of Complex Systems: Cybernetics in the 21st Century}, Koichi H. et. al (eds.), Birkh{\"a}user, 2003. 

\bibitem{b9} Delsarte P, Genin Y and Kamp Y, On the role of the Nevanlinna-Pick problem in circuit and system theory,  {\em Intern. J. Circuit Theory and Applications}, 1981, 9(2): 177-187.

\bibitem{b10} Youla D C and Saito M, Interpolation with positive real functions,  {\em J. Franklin Institute}, 1967, 284(2): 77-108.

\bibitem{b2} Byrnes C L, Georgiou T T and Lindquist A, A new approach to spectral estimation: A tunable high-resolution spectral estimator, {\em IEEE Trans. Signal Proc.}, 2000, 48: 3189-3205.

\bibitem{blomqvist}
Blomqvist A and Nagamune R, An extension of a Nevan\-linna-Pick interpolation solver to cases including derivative constraints, {\em IEEE Conf. Dec. Control},  2002, 3: 2552-2557.

\bibitem{GreenLimebeer}
Green, M. and Limebeer, D.J.N., {\em Linear Robust Control}, Prentice Hall, 1995. 

\bibitem{b6} Lindquist A, Partial Realization Theory and System Identification Redux, {\em Proc. 11th Asian Control Conference}, Gold Coast, Australia, Dec. 17-20, 2017, pp. 1946-1950.

\bibitem{BFL} Byrnes C I, Fanizza G and Lindquist A,  A homotopy continuation solution of the covariance extension equation,  in {\em New Directions and Applications in Control Theory}, Springer, Berlin, Heidelberg, 2005: 27-42.



\bibitem{b16} Georgiou T T, Spectral estimation via selective harmonic amplification, {\em  IEEE Trans. Aut. Contr.}, 2001, 46(1): 29-42.

\bibitem{Higham}
Higham N J, {\em Functions of Mmatrices: Theory and Computation}, SIAM, 2008.

\bibitem{b8} Byrnes C I and Lindquist A. On the Duality between Filtering and Nevanlinna--Pick Interpolation, {\em SIAM Journal on Control and Optimization}, 2000, 39(3): 757-775.

\bibitem{KLR}
Karlsson, J., Lindquist, A. and Ringh, A., The multidimensional moment
  problem with complexity constraint, {\em Integral Equations and Operator Theory},
  84 (2016), pp.~395--418.

\bibitem{Alexander}
Alexander, J.C., {\em The Topological Theory of an Embedding Method}, Wacker, H ed., Academic Press, N.Y., 1978.

\bibitem{AllgowerGeorg}
Allg{\"o}wer, E.L. and Georg, K, {\em Numerical Continuation Method, An Introduction}, Springer-Verlag, 1990.

\bibitem{Enqvist}
P. Enqvist, A homotopy approach to rational covariance extension with degree constraint, {\em Int. J. Applied Mathematics and Computer Science} SP-11 (2001), 1173-1201.

\bibitem{Nagamune robust solver}
Nagamune R, A robust solver using a continuation method for Nevanlinna-Pick interpolation with degree constraint, {\em IEEE Trans. Automatic Control}, 2003, 48(1): 113-117.

\bibitem{Nagamune R}
Nagamune R., Sensitivity reduction for SISO systems using the Nevanlinna-Pick interpolation with degree constraint, {\em Proc. 14th Intern. Symp. Mathematical Theory of Networks and Systems}, Perpignan, France. 2000.

\bibitem{DFT}
Doyle, J.C., Francis, B.A. and Tannenbaum, A.R., {\em Feedback Control Theory}, MacMillan, 1992.


\end{thebibliography}
\end{document}